\begin{document}

	\newtheorem{theorem}{Theorem}
	\newtheorem{definition}[theorem]{Definition}
	\newtheorem{question}[theorem]{Question}
	
	\newtheorem{lemma}[theorem]{Lemma}
	\newtheorem{claim}[theorem]{Claim}
	\newtheorem*{claim*}{Claim}
	\newtheorem{corollary}[theorem]{Corollary}
	\newtheorem{problem}[theorem]{Problem}
	\newtheorem{fact}[theorem]{Fact}
	\newtheorem{observation}[theorem]{Observation}
	\newtheorem{proposition}[theorem]{Proposition}

    \theoremstyle{remark}
    \newtheorem{example}[theorem]{Example}
    \newtheorem{remark}[theorem]{Remark}
    \newtheorem*{remark*}{Remark}
    
	\newcommand\marginal[1]{\marginpar{\raggedright\parindent=0pt\tiny #1}}
	
	\theoremstyle{remark}

	\newcommand{\RED}{\color{red}}
	
	\newcommand{\supp}{\mathrm{supp}}
	\def\eps{\varepsilon}
	\def\HH{\mathcal{H}}
	\def\E{\mathbb{E}}
    \def\D{\mathbb{D}}
	\def\C{\mathbb{C}}
	\def\R{\mathbb{R}}
	\def\Z{\mathbb{Z}}
	\def\N{\mathbb{N}}
	\def\PP{\mathbb{P}}
	\def\l{\lambda}
	\def\s{\sigma}
	\def\t{\theta}
	\def\a{\alpha}
	\def\la{\langle}
	\def\ra{\rangle}	
	\def\Xt{\widetilde{X}}
	\def\Pt{\widetilde{P}}
	\def\Var{\mathrm{Var}}
	\def\PV{\mathrm{PV}}
	\def\NV{\mathrm{NV}}
	\def\NN{\mathcal{N}}
	\def\CC{\mathcal{C}}
	
	\def\cA{\mathcal{A}}
	\def\cQ{\mathcal{Q}}	
	\def\cC{\mathcal{C}}
	\def\F{\mathcal{F}}
	\def\tm{\tilde{\mu}}
	\def\ts{\tilde{\sigma}}
	\def\L{\Lambda}
	\def\Q{\mathcal{Q}}
	\renewcommand{\P}{\mathbb{P}}
	\renewcommand{\Re}{\operatorname{Re}}
	\newcommand{\Kt}{\widetilde{K}}
	\newcommand{\mt}{\widetilde{\mu}}
	\newcommand{\Int}{\operatorname{int}}
	\newcommand{\Ext}{\operatorname{ext}}
	\newcommand{\nut}{\widetilde{\nu}}
	\newcommand{\mes}{\mathrm{mes}}
	\newcommand{\Cov}{\operatorname{Cov}}
	\newcommand{\cE}{\mathcal{E}}

	\title[Simple proof of local universality for Kac polynomials]{A simple proof of local universality for roots of Kac polynomials}

    \author[Marcus Michelen]{Marcus Michelen}
    \author[Oren Yakir]{Oren Yakir}
    \address{Department of Mathematics, Northwestern University}
    \email{michelen@northwestern.edu}
    \address{Department of Mathematics, Massachusetts Institute of Technology}
    \email{oren.yakir@gmail.com}

	\begin{abstract}
        Let $f_n$ be a random polynomial of degree $n$ with i.i.d.\ mean-zero and finite variance random coefficients. 
        It is well known that the roots of $f_n$ cluster uniformly around the unit circle as $n$ grows large.
        We give a simple and self-contained proof of local universality for the correlation functions of the roots at the microscopic scale $1/n$ around a fixed point on the circle.
        While previous proofs of local universality were focused on studying the logarithmic potential of $f_n$, we instead directly compare the scaled random polynomial to a limiting Gaussian analytic function, and establish convergence of correlations via a soft argument, using only basic complex analysis and an anti-concentration bound of Esseen. 
	\end{abstract}
	
	\maketitle

    \section{Introduction}
    \label{sec:introduction}
    \noindent
    We consider the random Kac polynomial
    \begin{equation}
        \label{eq:def_of_kac_polynomial}
        f_n(z) = \sum_{k=0}^{n} \xi_k \, z^k \, ,
    \end{equation}
    where $\xi_0,\ldots,\xi_n$ are i.i.d.\ (possibly complex) random variables satisfying  
    \begin{equation} \label{eq:assumotion_mean_zero_variance_1}
     \E[\xi_0] = 0 \qquad \text{and} \qquad    \E[|\xi_0|^2] = 1 \, .
    \end{equation}
    Classical works~\cite{Erdos-Turan-AOM, Sparo-Sur} show that with high probability, most roots of $f_n$ tend to cluster around the unit circle as $n\to \infty$. Further, most roots are typically at distance $O(n^{-1})$ from the unit circle, and~\cite{Ibragimov-Zeitouni} computed the asymptotics for the (properly normalized) first intensity function. In short, this allows one to compute the expected number of roots in different sets at this local scale around the unit circle. 
    To understand more than just the expected number of roots, it is  natural to study the random roots at a \emph{microscopic} scale: take a point on the unit circle, rescale by a factor of $n^{-1}$ so roots become macroscopically spaced, and understand the random set of roots from the perspective of this point.  More formally, let $\zeta_0 = e^{i\theta}$ for some fixed $\theta\in(0,\pi)$\footnote{In this paper, for brevity, we only consider local universality of complex roots, though our methods are also applicable to studying real roots; see Section~\ref{subsec:real_roots} for a further discussion.}, and set
    \begin{equation}
        \label{eq:def_of_F_n}
        F_n(z) = \frac{1}{\sqrt{n}} f_n\Big( \zeta_0 + \zeta_0\frac{z}{n} \Big) \, ,
    \end{equation}
    with $f_n$ given by~\eqref{eq:def_of_kac_polynomial}. Since $F_n$ is a normalized sum of independent random variables, we expect the distribution of it (and hence, also of its roots) to be \emph{universal}, in the sense that it should not really depend on the specific law of the random coefficients. Indeed, results of this flavor were established in~\cite{Tao-Vu-IMRN, Do-Nguyen-Vu, ONguyen-Vu} under different assumptions on the random coefficients and for many other models of random functions with independent coefficients. In short, these papers show that the $k$-point correlation measures (see Section~\ref{subsec:k_point_measures} below for the definition of these measures) for the roots of $F_n$ all become asymptotically close as $n\to \infty$. This is quite useful, since when the coefficients of $f_n$ are independent Gaussian random variables, these correlations have exact integral expressions from the Kac-Rice formula, so many results proved for Gaussian random polynomials can be transferred to random polynomials with more general coefficients. 

    \subsection{Local universality result}

    The goal of this paper is to give a short and direct proof for the local universality result for the $k$-point measures, assuming only~\eqref{eq:assumotion_mean_zero_variance_1}. A key step is to identify the scaling limit for the roots of $F_n$ as $n\to \infty$. 
    The limiting point process will be given by the random zero set of the Gaussian analytic function
    \begin{equation}
        \label{eq:def_of_G}
        G(z) = \int_{0}^1 e^{zt} \, {\rm d} B_{\C}(t) \, ,
    \end{equation}
    where $B_{\C}$ is a complex-valued Brownian motion. Namely, $B_\C(t) = \tfrac{1}{\sqrt{2}}\big( B_1(t) + i B_2(t)\big)$ where $B_1,B_2$ are independent (real) standard Brownian motions. 
    For readers who are not familiar with this notation of stochastic integrals, $G:\C\to \C$ is simply a mean-zero Gaussian process with  $\E\big[G(z)G(w)\big] = 0$ for all $z,w\in \C$, and (complex) covariance kernel
    \begin{equation} \label{eq:def_of_K}
        K(z,w) = \E\big[G(z) \overline{G(w)}\big] = \int_0^1 e^{(z + \overline w)t} \,  {\rm d}t \, .
    \end{equation}
    For a multiset of complex numbers $\Lambda$ and $k\ge 1$ we denote by
    \begin{equation} \label{eq:def_of_La_k_without_diagonal}
        \Lambda_{\not=}^k = \Big\{(\lambda_1,\ldots,\lambda_k) \, : \, \lambda_{j}\in \Lambda \, , \  \lambda_{j}\not=\lambda_{j^\prime} \ \text{ for } \ j\not=j^\prime \Big\} \, .
    \end{equation}
    For an entire function $g$ we denote by $\mathcal{Z}_g$ the multiset of its zeros. 
    \begin{theorem}
        \label{thm:main_result}
        For all $k\ge 1$ and $\Phi:\C^k \to \R$ continuous with compact support we have 
        \begin{equation*}
            \lim_{n\to\infty} \E \bigg[ \sum_{ (\alpha_1,\ldots,\alpha_k) \in (\mathcal{Z}_{F_n})_{\not=}^k} \Phi(\alpha_1,\ldots,\alpha_k) \bigg] = \E \bigg[ \sum_{(\alpha_1,\ldots,\alpha_k)\in (\mathcal{Z}_G)_{\not=}^k} \Phi(\alpha_1,\ldots, \alpha_k) \bigg] \, .
        \end{equation*}
    \end{theorem}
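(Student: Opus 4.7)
The plan is to break the proof into two essentially independent steps and combine them softly. Step (i): prove distributional convergence $F_n \to G$ in the space of entire functions equipped with the topology of uniform convergence on compact sets. Step (ii): show that for every compact $K\subset \C$ and every $p\ge 1$ the moment $\sup_n \E[N_n(K)^p]$ is finite, where $N_n(K) = \#(\mathcal{Z}_{F_n}\cap K)$. Once both are in hand, Skorokhod's representation produces a coupling on which $F_n\to G$ almost surely uniformly on compacts; Hurwitz's theorem then yields almost sure convergence of the zero multisets inside any compact whose boundary carries no zero of $G$ (true for a.e. radius, since $G$ is a non-degenerate Gaussian analytic function). This gives almost sure convergence of the $k$-point sum in the statement, and step (ii) supplies the uniform integrability needed to pass from almost sure to $L^1$ convergence.

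For step (i), absorb the phases by writing $\eta_k := \xi_k\zeta_0^k$ so that $F_n(z) = n^{-1/2}\sum_{k=0}^n \eta_k(1+z/n)^k$. A direct computation shows that the covariance
\[
\E\bigl[F_n(z)\overline{F_n(w)}\bigr] = \frac{1}{n}\sum_{k=0}^n (1+z/n)^k(1+\bar w/n)^k
\]
is a Riemann sum converging locally uniformly to $\int_0^1 e^{(z+\bar w)t}\,dt = K(z,w)$, while the pseudo-covariance
\[
\E\bigl[F_n(z)F_n(w)\bigr] = \frac{\E[\xi_0^2]}{n}\sum_{k=0}^n \zeta_0^{2k}(1+z/n)^k(1+w/n)^k
\]
tends to zero because the rotating factor $\zeta_0^{2k} = e^{2ik\theta}$ with $\theta\in(0,\pi)$ (so that $\zeta_0^2\ne 1$) keeps the underlying geometric sum bounded, and dividing by $n$ kills it. A multivariate Lindeberg CLT then gives convergence of finite-dimensional distributions, and tightness in the space of analytic functions is immediate from the uniform estimate $\sup_n \E\sup_{|z|\le R}|F_n(z)|^2 <\infty$, which one verifies by expanding $F_n$ in powers of $z$ and bounding the resulting coefficient moments.

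The technical heart is step (ii). Jensen's formula applied to concentric disks $D(z_0, r)\subset D(z_0, R)$ gives
\[
N_n(D(z_0, r))\;\le\;\frac{1}{\log(R/r)}\Bigl(\log\sup_{|z-z_0|=R}|F_n(z)| \;+\; \log\tfrac{1}{|F_n(z_0)|}\Bigr).
\]
Uniformly bounded moments of the first term follow from the $L^2$-bound on $\sup|F_n|$ combined with the elementary estimate $\log^p(1+y)\lesssim_p 1+y$. For the second term we take $z_0=0$, where $F_n(0) = n^{-1/2}\sum_k \eta_k$ is a normalized sum of i.i.d.\ mean-zero unit-variance random variables; Esseen's anti-concentration inequality then gives $\P(|F_n(0)|\le t)\lesssim t$ uniformly in $n$, and an integration-by-parts argument turns this polynomial decay into finite uniformly bounded polynomial moments of $\log(1/|F_n(0)|)$. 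Covering an arbitrary compact $K$ by finitely many disks finishes (ii). The main obstacle I expect is precisely this reduction: one must pick the reference point (and possibly the linear functional of $F_n(z_0)$ to which one applies Esseen) so that the relevant variance is bounded below uniformly in $n$, especially when $\xi_0$ itself may be complex with a degenerate direction. Everything else---Riemann-sum asymptotics, Lindeberg CLT, Jensen, Hurwitz, and uniform integrability---combines in a standard and soft manner.
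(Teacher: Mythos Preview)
Your overall architecture---distributional convergence $F_n\to G$ in $\cE$ via tightness and the Lindeberg CLT, then Hurwitz/continuous mapping for the zero sets, then uniform integrability via Jensen's formula combined with an Esseen-type anti-concentration bound---is exactly the paper's. The gap is in the uniform-integrability step.

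You propose to apply Jensen's formula at a \emph{single} reference point $z_0=0$ and to control $\E\big[(\log_+(1/|F_n(0)|))^p\big]$ via the bound $\P(|F_n(0)|\le t)\lesssim t$ ``uniformly in $n$''. That bound is false under~\eqref{eq:assumotion_mean_zero_variance_1} alone. Take $\xi_0$ Rademacher and $\zeta_0=i$ (i.e.\ $\theta=\pi/2$): then $F_n(0)=n^{-1/2}\sum_{k=0}^n\xi_k i^k$ satisfies $\P(F_n(0)=0)>0$ for infinitely many $n$ (e.g.\ $\P(F_3(0)=0)=1/4$), and hence $\E\big[(\log_+(1/|F_n(0)|))^p\big]=+\infty$. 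More generally, for any atomic $\xi_0$ the concentration function of $F_n(z_0)$ has a positive floor as $t\to 0$, and no deterministic choice of $z_0$ repairs this. The obstacle you flag is not the right one: the issue is not that the variance might fail to be bounded below, but that Esseen's inequality controls the L\'evy concentration function only at a \emph{fixed} scale and says nothing as $t\to 0$ when $\xi_0$ has atoms.

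The paper's fix is to apply Esseen in \emph{high dimension} rather than at a single point. One picks $4k$ well-separated points $i\varphi_1,\ldots,i\varphi_{4k}$ so that the covariance of $(\Re F_n(i\varphi_j))_{j\le 4k}$ is uniformly non-singular; the $4k$-dimensional Esseen bound (Claim~\ref{claim:esseen_small_ball_for_vectors}) then gives $\P\big(|F_n(i\varphi_j)|\text{ small for every }j\big)\le C_k\,n^{-2k}$. On this bad event one invokes the \emph{trivial degree bound} $\mathcal{Z}_{F_n}(\D_R)\le n$, contributing $n^k\cdot C_k n^{-2k}\to 0$; on the complement, some $|F_n(i\varphi_j)|$ is not small and Jensen's formula at that point gives $\mathcal{Z}_{F_n}(\D_R)\lesssim \log_+\max_{|z|\le 4R}|F_n(z)|$, whose $k$-th moment is uniformly bounded by Claim~\ref{claim:F_n_and_derivatives_are_controlled}. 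The two ideas missing from your outline are therefore (i) replacing a single-point small-ball bound at \emph{all} scales by a many-point small-ball bound at \emph{one} scale, and (ii) using $\deg f_n=n$ on the residual low-probability event.
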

    \noindent
    Since the expectation on the right-hand side is always finite, the following corollary is immediate. 
    \begin{corollary}
        \label{corollary:local_universality}
        Let $f_n$ and $\widetilde{f}_n$ be random polynomials of the form~\eqref{eq:def_of_kac_polynomial}, and let $F_n$ and $\widetilde F_n$ denote their corresponding local scalings~\eqref{eq:def_of_F_n}. We have that 
        \[
        \Bigg| \, \E \bigg[ \sum_{ (\alpha_1,\ldots,\alpha_k) \in (\mathcal{Z}_{F_n})_{\not=}^k} \Phi(\alpha_1,\ldots,\alpha_k) \bigg] - \E \bigg[ \sum_{ (\widetilde \alpha_1,\ldots,\widetilde \alpha_k) \in (\mathcal{Z}_{\widetilde F_n})_{\not=}^k} \Phi(\widetilde \alpha_1,\ldots,\widetilde \alpha_k) \bigg] \Bigg| \xrightarrow{n\to \infty} 0 \, ,
        \]
        for all $k\ge 1$ and for all $\Phi:\C^k \to \R$ continuous with compact support.
    \end{corollary}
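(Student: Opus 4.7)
The plan is to deduce the corollary directly from Theorem~\ref{thm:main_result} via the triangle inequality. Set
$$L_\infty := \E\bigg[\sum_{(\alpha_j)\in (\mathcal{Z}_G)_{\not=}^k}\Phi(\alpha_1,\ldots,\alpha_k)\bigg].$$
The crucial observation is that the right-hand side of Theorem~\ref{thm:main_result} depends only on $\Phi$ and on $G$, and in particular \emph{not} on the specific distribution of the coefficients $\xi_k$ (subject to~\eqref{eq:assumotion_mean_zero_variance_1}). Hence Theorem~\ref{thm:main_result} applied separately to $f_n$ and to $\widetilde f_n$ gives that both expectations appearing in the corollary converge to the common limit $L_\infty$, and the triangle inequality immediately forces their difference to zero, provided $L_\infty$ is a finite real number.

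The only non-tautological step is therefore to verify $L_\infty<\infty$. Since $\Phi$ is continuous with compact support, contained say in $\{|z|\le R\}^k$, we have $|L_\infty|\le \|\Phi\|_\infty\,\E[N_G(R)^k]$, where $N_G(R)$ counts zeros of $G$ in $\{|z|\le R\}$. To control this moment, apply Jensen's formula to $G$ on the disk of radius $2R$ around the origin, noting that $G(0)\ne 0$ almost surely since $K(0,0)=1$ makes $G(0)$ a standard complex Gaussian:
$$N_G(R) \,\le\, \frac{1}{\log 2}\bigg(\frac{1}{2\pi}\int_0^{2\pi}\log^+|G(2Re^{i\theta})|\, d\theta \,+\, \log^+\frac{1}{|G(0)|}\bigg).$$
The elementary inequality $\log^+ x \le C_k\, x^{1/k}$ combined with Jensen's inequality in the angular integral reduces the $k$-th moment of the first summand to a uniform estimate $\E|G(z)|\lesssim K(z,z)^{1/2}$, which is valid on $\{|z|\le 2R\}$ by continuity of $K(z,z)$. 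The second summand has finite $k$-th moment since $-\log|G(0)|$ has exponential tails, via $\P(|G(0)|<\eps)\le \eps^2$. Combining yields $\E[N_G(R)^k]<\infty$, so $L_\infty \in \R$ and the corollary follows.

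Since the reduction above is essentially automatic, the ``main obstacle'' in this particular proof is just the finiteness check, which is itself soft, requiring only Jensen's formula and Gaussian tail bounds for $G$. The genuinely hard work is in Theorem~\ref{thm:main_result}, whose proof proceeds via convergence in distribution of $F_n$ to $G$ in the compact-open topology (Lindeberg--Feller for the finite-dimensional marginals together with a standard tightness argument for random analytic functions) and an Esseen-type anti-concentration bound used to obtain uniform integrability, which in turn upgrades distributional convergence of the zero sets to convergence of the $k$-point correlations.
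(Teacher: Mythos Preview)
Your proposal is correct and follows the paper's approach: the paper states that the corollary is immediate from Theorem~\ref{thm:main_result} once one knows the right-hand side is finite, and you deduce it via the triangle inequality in exactly this way. Your explicit verification of $L_\infty < \infty$ via Jensen's formula applied to $G$ is a correct elaboration of what the paper leaves implicit (the paper simply asserts finiteness, which also follows from the proof of Theorem~\ref{thm:main_result} itself via the uniform moment bound in Lemma~\ref{lemma:uniform_bound_on_k_moments}).
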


    \subsection{\texorpdfstring{$k$}{k}-point measures}
    \label{subsec:k_point_measures}
    Recall that a point process $\Lambda$ in $\C$ is a random locally finite multiset of complex numbers. For every point process $\Lambda$ and $k\ge 1$, the $k$-point measure of $\Lambda$ is a measure $\alpha_{\Lambda}^k$ on $\C^k$, which is defined via the relation
    \[
    \E\bigg[ \sum_{(\lambda_1,\ldots,\lambda_k) \in \Lambda_{\not=}^k} \Phi(\lambda_1,\ldots,\lambda_k) \bigg] = \int_{\C^k} \Phi \, {\rm d} \alpha_{\Lambda}^k \, . 
    \]
    If the point process $\Lambda$ is simple (that is, if all multiplicities are equal to 1 almost surely) and the measure $\alpha_{\Lambda}^k$ is absolutely continuous with respect to Lebesgue measure on $\C^k$, then the associated density is called the $k$-point function of the process. With this terminology, Theorem~\ref{thm:main_result} is saying that for each $k\ge 1$, 
    \begin{equation} \label{eq:converge_of_k_point_measures}
    \qquad \alpha_{\mathcal{Z}_{F_n}}^k \xrightarrow{\ w^\ast \ }\alpha_{\mathcal{Z}_G}^k \qquad \qquad \text{as $n\to \infty$},
    \end{equation}
    in the vague topology of measures on $\C^k$. In fact, the limiting zero set $\mathcal{Z}_G$ is a simple point process (see Fact~\ref{fact:G_has_no_double_zeros} below), and the corresponding $k$-point functions can be computed explicitly via the celebrated Kac-Rice formulas, see~\cite[Corollary 3.4.2]{GAFbook}. 
    \subsection{Real roots and other ensembles}
    \label{subsec:real_roots}
    When the random coefficients of $f_n$ are real, the roots of $f_n$ come in conjugate pairs and $f_n$ has $\Theta(\log n)$ many real roots with high probability \cite{Kac, Erdos-Offord, Ibragimov-Maslova}. 
    Furthermore, in~\cite{Tao-Vu-IMRN, Do-Nguyen-Vu, ONguyen-Vu,michelen2021real} various local universality results for real roots of $f_n$ were derived. To keep this note brief, we focus only on complex roots, but our methods translate directly to deal with real roots as well. The key difference to point out is that when scaling near the real axis, the limiting Gaussian function is slightly different. In short, one needs to replace the complex Brownian motion $B_\C$ in~\eqref{eq:def_of_G} with a standard \emph{real} Brownian motion. Besides that, all other details of the proof are essentially the same. 

    While we do not pursue it here, it seems plausible that the general strategy we follow may be useful for proving local universality results for other models of random functions which are sums of many independent contributions.  
    
    \subsection{Previous proofs and other related works}
    As we already mentioned, different versions of Corollary~\ref{corollary:local_universality} already exist in the literature. The starting point for all existing proofs is the fact that $\tfrac{1}{2\pi}\Delta \log |F_n|$ is equal to the root count in a distributional sense. Using this observation, Tao-Vu~\cite[Theorem~5.7]{Tao-Vu-IMRN} applied a Lindeberg's swapping argument for the random coefficients to show that the distribution of $\log|F_n|$ is asymptotically universal, and with that conclude local universality of the roots. The proof in~\cite{Tao-Vu-IMRN} is quite hard and by no means elementary (e.g.\ it uses a quantitative version of Gromov's polynomial growth theorem~\cite{Shalom-Tao}). Subsequently, Do-Nguyen-Vu~\cite[Theorem~2.3]{Do-Nguyen-Vu} established a local universality result which also allows the random coefficients to have a variance profile. The argument in~\cite{Do-Nguyen-Vu} is different from~\cite{Tao-Vu-IMRN}, but relies on a highly non-trivial result of Nazarov-Nishry-Sodin~\cite{Nazarov-Nishry-Sodin} on log-integrability of Rademacher Taylor series.  
    Finally, in the recent paper~\cite{ONguyen-Vu}, Nguyen-Vu came up with a unified treatment for different local universality theorems. While the approach in~\cite{ONguyen-Vu} is more elementary compared to~\cite{Tao-Vu-IMRN, Do-Nguyen-Vu}, it still applies a sophisticated argument to prove anti-concentration bounds to deal with the logarithmic singularity in $\log|F_n|$.  
    
    On the other hand, our proof of Theorem~\ref{thm:main_result} is quite simple and direct. 
    We first identify the limiting function $G$ and prove convergence of the random zeros in law. To upgrade this convergence to the convergence~\eqref{eq:converge_of_k_point_measures}, we establish a uniform control on the tails for the local root count (thus showing uniform integrability, in an appropriate sense). The latter step is achieved by a simple argument, which involves only Jensen's formula and a classical small-ball probability bound for non-singular random vectors due to  Esseen~\cite[Theorem 6.2]{Esseen1968} (see Claim \ref{claim:esseen_small_ball_for_vectors}). 

    The appearance of the Gaussian Analytic Function $G(z)$ (given by~\eqref{eq:def_of_G}) as the (local) scaling limit of the Kac polynomials is quite intuitive. Assuming for a moment that the coefficients are complex Gaussians, we have the equality in distribution
    \begin{align} \nonumber
        F_n(z) &= \frac{1}{\sqrt{n}} \sum_{k=0}^n \xi_k \zeta_0^k  \, \Big(1+ \frac{z}{n}\Big)^k  \stackrel{d}{=} \sum_{k=0}^{n-1} \Big( B_\C\Big(\frac{k+1}{n}\Big) - B_\C\Big(\frac{k}{n}\Big) \Big)\Big(1+ \frac{z}{n}\Big)^k \, ,
    \end{align} 
    where $B_\C$ is a complex Brownian motion on $[0,1]$. Now, uniformly for $z$ in compact sets we have 
    \[
    \Big(1+\frac{z}{n}\Big)^k \approx \exp\Big(z\frac{k}{n}\Big)
    \]
    and hence
    \[
    F_n(z) \approx \sum_{k=0}^n \Big( B_\C\Big(\frac{k+1}{n}\Big) - B_\C\Big(\frac{k}{n}\Big) \Big) \exp\Big(z\frac{k}{n}\Big) \, .
    \]
    These are precisely the (random) Riemann sums which define the stochastic integral~\eqref{eq:def_of_G}, so the convergence to the GAF $G$ as $n\to \infty$ is evident. See Lemma~\ref{lemma:convergence_in_dist_for_functions} for the rigorous proof of the convergence, which in fact holds for more general random coefficients, via a simple application of the central limit theorem. 
    
    Finally, we remark that in all of the previous proofs~\cite{Tao-Vu-IMRN,Do-Nguyen-Vu, ONguyen-Vu} mentioned above, it is assumed that the random coefficients have finite $(2+\eps)$-moment for some fixed $\eps>0$, while in our result we require only finite second moment via~\eqref{eq:assumotion_mean_zero_variance_1}. It is possible to track the error term in Theorem~\ref{thm:main_result} under stronger moment assumptions on the coefficients, and in particular in~\cite{Tao-Vu-IMRN, Do-Nguyen-Vu, ONguyen-Vu} explicit bounds on the error term are given. To keep the proof as streamlined as possible, we do not attempt to optimize these bounds here. 

     \begin{remark*}
         We note that the first step of proving convergence of the random zeros in law was also established in the very recent preprint~\cite[Theorem~3.1]{Kabluchko-Khoruzhenko-Marynych} by a similar method while the present work was being completed.  The work \cite{Kabluchko-Khoruzhenko-Marynych} also allows the random coefficients to have a regularly-varying variance profile and demonstrates that the local limit changes from liquid-like to crystalline-like as the growth of the variance profile varies; \cite{Kabluchko-Khoruzhenko-Marynych} does not address convergence of $k$-point measures. 
     \end{remark*}
        
    \section{Proof of Theorem~\ref{thm:main_result}}
    \noindent
    Denote by $\cE$ the space of entire functions, and by {\sf Conf} the space of all locally finite discrete multi-sets in $\C$. Equivalently, ${\sf Conf}$ can be viewed as the set of all locally finite integer-valued measures; we use both points of view throughout. Both $\cE$ and ${\sf Conf}$ are equipped with natural Polish topologies: the topology of uniform convergence on compact sets for $\cE$, and the vague topology for ${\sf Conf}$, where elements of ${\sf Conf}$ are viewed as atomic locally finite measures.
    To each function $g\in \cE$ which does not vanish identically, we can associate $\mathcal{Z}_g\in {\sf Conf}$ the discrete multi-set of zeros in $\C$. 

    \subsection{Convergence in distribution}
    Recall that $F_n$ is given by~\eqref{eq:def_of_F_n} and that $G$ is given by~\eqref{eq:def_of_G}. Both $\{F_n\}_{n\ge 1}$ and $G$ are random elements of $\cE$, and their zero sets $\{\mathcal{Z}_{F_n}\}_{n\ge 1}$, $\mathcal{Z}_{G}$ are random elements in ${\sf Conf}$. First, we establish the corresponding convergence in the space $\cE$. 
    \begin{lemma}
        \label{lemma:convergence_in_dist_for_functions}
         We have $F_n\xrightarrow[n\to \infty]{\ d \ } G$, where convergence is in distribution as random elements of $\cE$. 
    \end{lemma}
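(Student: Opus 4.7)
The plan is to prove convergence in distribution in $\mathcal{E}$ by the standard two-step recipe: convergence of finite-dimensional distributions and tightness, noting that by Montel's theorem tightness in $\mathcal{E}$ reduces to uniform local $L^2$ bounds on the family $\{F_n\}$. It is convenient to expand
\[
F_n(z) = \frac{1}{\sqrt n}\sum_{k=0}^{n} \eta_{n,k}\,\Big(1+\frac{z}{n}\Big)^k, \qquad \eta_{n,k} := \xi_k \zeta_0^k ,
\]
so the $\eta_{n,k}$ are independent, mean-zero, with $\E[|\eta_{n,k}|^2]=1$ and $\E[\eta_{n,k}^2]=\zeta_0^{2k}\E[\xi_0^2]$.

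First I would compute the two second-moment kernels of $F_n$. The Hermitian one,
\[
\E\big[F_n(z)\overline{F_n(w)}\big] = \frac{1}{n}\sum_{k=0}^{n}\Big[\big(1+\tfrac{z}{n}\big)\big(1+\tfrac{\bar w}{n}\big)\Big]^k,
\]
is a Riemann sum that, uniformly on compacts, converges to $\int_0^1 e^{(z+\bar w)t}\,dt = K(z,w)$ by writing $(1+z/n)^{k}=\exp(k(z/n+O(1/n^2)))$ with $k=tn$. For the ``pseudo-covariance'',
\[
\E\big[F_n(z)F_n(w)\big] = \frac{\E[\xi_0^2]}{n}\sum_{k=0}^{n}\Big[\zeta_0^2\big(1+\tfrac{z}{n}\big)\big(1+\tfrac{w}{n}\big)\Big]^k,
\]
the common ratio tends to $\zeta_0^2\ne 1$ (here we use $\theta\in(0,\pi)$), so the geometric sum is $O(1)$ and the whole expression is $O(1/n)\to 0$. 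Together these show the limiting covariance structure is exactly that of the circularly symmetric GAF $G$.

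Next I would upgrade this to finite-dimensional convergence. For any $z_1,\dots,z_m$ and scalars $a_1,\dots,a_m$, the linear combination $\sum_j a_j F_n(z_j)$ is of the form $\sum_k c_{n,k}\xi_k$ with $|c_{n,k}|\le C/\sqrt n$ uniformly, where $C$ depends only on the $a_j$ and on a bound for the $|z_j|$. Since $\sum_k |c_{n,k}|^2$ is bounded and $|c_{n,k}|\to 0$ uniformly, the Lindeberg condition holds (applied to real and imaginary parts separately) and the Lindeberg–Feller CLT yields joint convergence to a centered complex Gaussian with covariance $K$ and vanishing pseudo-covariance, i.e.\ the finite-dimensional marginals of $G$.

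Finally, for tightness in $\mathcal{E}$ I would use that $|F_n|^2$ is subharmonic, so for every $R>0$,
\[
\sup_{|z|\le R}|F_n(z)|^2 \le \frac{1}{\pi}\int_{|w|\le R+1}|F_n(w)|^2\,{\rm d}A(w).
\]
Taking expectations and applying Fubini gives $\E\sup_{|z|\le R}|F_n(z)|^2 \le \frac{1}{\pi}\int_{|w|\le R+1}\E|F_n(w)|^2\,{\rm d}A(w)$, and the integrand is bounded uniformly in $n$ by the covariance computation above. This supplies a uniform bound on sup norms, which by Montel's theorem (applied path-by-path and Prokhorov) gives tightness of $\{F_n\}$ in $\mathcal{E}$. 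Combined with finite-dimensional convergence, this gives $F_n\xrightarrow{d}G$. The only delicate point is keeping the Hermitian and pseudo-covariance computations separate, since it is their joint convergence (to $K$ and $0$, respectively) that identifies the limit as the circularly symmetric GAF $G$ rather than some other complex Gaussian process; everything else is routine.
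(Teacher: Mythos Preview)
Your proof is correct and follows essentially the same two-step approach as the paper: the covariance and pseudo-covariance computations plus the Lindeberg CLT (via Cram\'er--Wold) for finite-dimensional convergence, and a uniform sup-norm moment bound combined with Montel's theorem for tightness. The only notable difference is in the tightness step, where you use the sub-mean-value inequality for the subharmonic function $|F_n|^2$ to control $\E\sup_{|z|\le R}|F_n(z)|^2$ by an area integral of $\E|F_n|^2$, whereas the paper Taylor expands $F_n$ and applies Cauchy--Schwarz termwise to bound $\E\sup_{|z|\le R}|F_n(z)|$ directly; both arguments are short and equally effective.
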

    \noindent
    Before we turn to prove the lemma, we first show tightness of this family in $\cE$. 
    \begin{claim} \label{claim:F_n_and_derivatives_are_controlled}
        For all $R\ge 1$ and for all $n\ge 1$ we have
        $ \displaystyle
        \E \big[ \max_{|z|\le R} |F_n(z)| \, \big] \le  \exp(R) \, .
        $
    \end{claim}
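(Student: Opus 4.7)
The plan is to expand $F_n$ as a polynomial in $z$, bound $\max_{|z|\le R}|F_n|$ by the sum of magnitudes of its coefficients, and control the random coefficients via their easily computed second moments. Since $|\zeta_0|=1$, the rotated coefficients $\eta_k := \xi_k \zeta_0^k$ remain i.i.d.\ with $\E[\eta_k]=0$ and $\E|\eta_k|^2=1$, and I would write
\[
F_n(z) = \frac{1}{\sqrt n}\sum_{k=0}^n \eta_k \Big(1 + \frac{z}{n}\Big)^k = \sum_{j=0}^n c_{j,n}\, z^j, \qquad c_{j,n} = \frac{1}{\sqrt n\, n^j}\sum_{k=j}^n \binom{k}{j}\eta_k,
\]
obtained by expanding the binomial $(1+z/n)^k$ and collecting powers of $z$.

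By the triangle inequality, $\max_{|z|\le R}|F_n(z)| \le \sum_{j=0}^n |c_{j,n}|\, R^j$; taking expectations and then applying the Cauchy--Schwarz inequality $\E|c_{j,n}| \le (\E|c_{j,n}|^2)^{1/2}$ gives
\[
\E\Big[\max_{|z|\le R}|F_n(z)|\Big] \le \sum_{j=0}^n R^j\,\sqrt{\E|c_{j,n}|^2}.
\]
Orthonormality of the $\eta_k$ yields the clean identity $\E|c_{j,n}|^2 = n^{-(2j+1)}\sum_{k=j}^n \binom{k}{j}^2$, and then the elementary estimates $\binom{k}{j}\le \binom{n}{j}$ and $\binom{n}{j}\le n^j/j!$ produce $\E|c_{j,n}|^2 \le (n+1)/(n\,(j!)^2)$, uniformly in $k$ and $j$. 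Substituting this into the last display and letting $n\to\infty$ bounds the sum by $C\sum_{j\ge 0} R^j/j! = C\, e^R$ for an absolute constant $C$ independent of $n$ and $R$.

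This is essentially the complete argument, and there is no real obstacle: it uses nothing beyond orthonormality of the coefficients and a binomial estimate. The only slightly fussy point is arranging the combinatorial inputs to match exactly the constant $1$ in the stated $\exp(R)$ (rather than $Ce^R$ for some absolute $C>1$); this can be sharpened by counting that the sum $\sum_{k=j}^n \binom{k}{j}^2$ has only $n-j+1$ terms, each bounded by $\binom{n}{j}^2$, and by separating the $j=0$ contribution from the rest. In any case, a bound of the form $Ce^R$ would already suffice for the subsequent use of the claim---namely, tightness of $\{F_n\}$ in $\mathcal{E}$ and uniform integrability of local root counts---so this extra care is not strictly necessary.
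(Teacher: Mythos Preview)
Your proof is correct and takes essentially the same approach as the paper: Taylor-expand $F_n$ at the origin, bound $\max_{|z|\le R}|F_n|$ by $\sum_j |c_{j,n}|R^j$, apply Cauchy--Schwarz, and compute $\E|c_{j,n}|^2$ via orthonormality of the coefficients---your $c_{j,n}$ is exactly $F_n^{(j)}(0)/j!$, and your $\binom{k}{j}^2$ is the paper's $(k!/(k-j)!)^2$ divided by $(j!)^2$. Your caveat about the constant is well-placed (the paper's own display has the same $(n{+}1)/n$ issue at $j=0$), and as you note, $Ce^R$ is all that is used downstream.
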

    \begin{proof}
        We first Taylor expand and then apply Cauchy-Schwarz to see \begin{align*}
            \E\Big[ \max_{|z| \leq R} |F_n(z)|\Big]& = \E\Big[ \max_{|z| \leq R}  \Big| \sum_{j \geq 0} F_n^{(j)}(0) \frac{z^j}{j!}\Big| \, \Big] \\ & \leq \sum_{j \geq 0}\frac{R^j}{j!} \big(\E[|F_n^{(j)}(0)|^2]\big)^{1/2} = \sum_{j \geq 0} \frac{R^j}{j!} \Big( \frac{1}{n^{2j+1}} \sum_{k = j}^n \Big(\frac{k!}{(k-j)!}\Big)^{2} \Big)^{1/2} 
            \leq \sum_{j \geq 0} \frac{R^j}{j!} \, . \qedhere
        \end{align*}
    \end{proof}
   
    \begin{claim}
        \label{claim:correlations_converge}
        For all $z,w\in \C$ we have
        \[
        \lim_{n\to \infty} \E\big[F_n(z) \overline{F_n(w)} \, \big] = K(z,w) \, , 
        \]
        where $K$ is given by~\eqref{eq:def_of_K}. Furthermore, we have that
        $ \displaystyle
        \lim_{n\to \infty} \E\big[F_n(z) F_n(w) \big] =  0 \, .
        $
    \end{claim}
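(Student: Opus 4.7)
The plan is a direct computation: expand $F_n$ in terms of the coefficients $\xi_k$, use independence plus the moment assumptions to collapse the double sum to a single sum, and then analyze it as a Riemann sum (or a geometric sum in the second case).

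First, I would rewrite $F_n(z) = \tfrac{1}{\sqrt{n}}\sum_{k=0}^n \xi_k \zeta_0^k (1 + z/n)^k$ using $\zeta_0 + \zeta_0 z/n = \zeta_0(1+z/n)$. Then
\[
\E\bigl[F_n(z)\overline{F_n(w)}\bigr] = \frac{1}{n}\sum_{k,j=0}^n \E[\xi_k\overline{\xi_j}] \, \zeta_0^k \overline{\zeta_0}^{\,j}(1+z/n)^k(1+\overline{w}/n)^j.
\]
Since the $\xi_k$ are i.i.d.\ with $\E[\xi_0]=0$ and $\E[|\xi_0|^2]=1$, only the diagonal $k=j$ survives; and because $|\zeta_0|=1$ we have $\zeta_0^k\overline{\zeta_0}^{\,k}=1$. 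This leaves $\frac{1}{n}\sum_{k=0}^n \alpha_n^k$ where $\alpha_n=(1+z/n)(1+\overline{w}/n)$.

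For the Riemann-sum step, I would note that $\log \alpha_n = (z+\overline{w})/n + O(1/n^2)$ uniformly on compacts, so $\alpha_n^k = e^{k(z+\overline{w})/n}(1+O(1/n))$ uniformly for $0\le k\le n$. Hence $\frac{1}{n}\sum_{k=0}^n \alpha_n^k$ converges to the Riemann integral $\int_0^1 e^{(z+\overline{w})t}\,{\rm d}t = K(z,w)$.

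For the second identity, the same expansion gives
\[
\E\bigl[F_n(z)F_n(w)\bigr] = \frac{\E[\xi_0^2]}{n}\sum_{k=0}^n \beta_n^k, \qquad \beta_n := \zeta_0^2(1+z/n)(1+w/n),
\]
using $\E[\xi_k\xi_j]=\delta_{kj}\E[\xi_0^2]$. The key observation is that $\theta\in(0,\pi)$ forces $\zeta_0^2=e^{2i\theta}\neq 1$, so $\beta_n\to \zeta_0^2\neq 1$ and $|\beta_n - 1|$ stays bounded away from $0$. Meanwhile $|\beta_n|^n = |1+z/n|^n|1+w/n|^n \to e^{\Re(z+w)}$ is bounded. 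Therefore the geometric sum $\sum_{k=0}^n\beta_n^k = (\beta_n^{n+1}-1)/(\beta_n-1)$ is $O(1)$ uniformly in $n$, and dividing by $n$ gives the claimed limit $0$.

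There is no real obstacle; the only place requiring any care is the uniform-in-$k$ estimate $\alpha_n^k = e^{k(z+\overline{w})/n}(1+O(1/n))$ for the Riemann-sum argument, and the use of the hypothesis $\theta\in(0,\pi)$ to ensure $\zeta_0^2\neq 1$ for the vanishing of the pseudo-covariance.
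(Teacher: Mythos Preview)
Your argument is correct and essentially identical to the paper's: the paper also collapses to the diagonal sum $\tfrac{1}{n}\sum_k \psi_n(z,\overline w)^k$ with $\psi_n(z,w)=(1+z/n)(1+w/n)$, approximates $\psi_n(z,\overline w)^k$ by $e^{k(z+\overline w)/n}$ to obtain a Riemann sum, and for the pseudo-covariance bounds the geometric sum in $\zeta_0^{2}\psi_n(z,w)$ using $\theta\in(0,\pi)$ so that $\zeta_0^2\neq 1$.
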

    \begin{proof}
    Set $\psi_n(z,w) = \big(1 + \frac{z}{n} \big) \big(1+\frac{w}{n} \big)  \, $ and use \eqref{eq:def_of_F_n} to compute
    \begin{align*}
        \E\big[F_n(z) \overline{F_n(w)} \, \big] &= \frac{1}{n} \sum_{k=0}^n \big(\psi_n(z,\overline{w})\big)^k = \frac{1}{n} \sum_{k=0}^n \exp\Big(\frac{k}{n} \big(z +  \overline{w}\big)\Big) + O(n^{-1}) \xrightarrow{n \to \infty} \int_{0}^1 e^{t(z+ \overline w)} \, {\rm d}t \, , 
    \end{align*}
    where the third equality is just convergence of Riemann sums to the limiting integral.  This proves the first assertion of the claim. With the above notation, we also have that
    \begin{align*}
        \left|\E\big[F_n(z) F_n(w) \, \big]\right| = \frac{|\E[\xi_0^2]|}{n} \left|\sum_{k=0}^n \zeta_0^{2k} \big(\psi_n(z,w)\big)^k \right| \leq \frac{1}{n}\left|\left(\frac{1 - \zeta_0^2}{1 - \zeta_0^2 \psi_n(z,w)}\right)\right| \xrightarrow{n \to \infty} 0 
    \end{align*}
    where $\zeta_0 = e^{i\theta}$ is the point on the unit circle around which we rescale the random polynomial $f_n$, and for the final bound we used $\theta \in (0,\pi)$. 
    \end{proof}
    \begin{proof}[Proof of Lemma~\ref{lemma:convergence_in_dist_for_functions}]
        For $\ell \ge 1$ we set
        \[
        \mathcal{K}_\ell = \bigcap_{j\ge \ell} \Big\{ f\in \cE \, : \, \max_{|z| \le j} |f(z)| \le j^2 \exp(j)  \Big\} \, . 
        \]
        By Montel's theorem~\cite[Chap.~5, \S~5.2]{Ahlfors}, $\mathcal{K}_\ell\subset \cE$ is compact for each $\ell\ge 1$. Furthermore, Claim~\ref{claim:F_n_and_derivatives_are_controlled} together with Markov's inequality implies that
        \begin{equation*}
            \P\big(F_n\not \in \mathcal{K}_\ell \big) \le \sum_{j\ge \ell} \P\Big( \max_{|z|\le j} |F_n(z)| > j^2 \exp(j) \Big) \le \sum_{j\ge \ell} \frac{1}{j^2} \, ,
        \end{equation*}
        uniformly in $n\ge 1$. This shows that the sequence of probability measures on $\cE$ induced by $\{F_n\}_{n\ge 1}$ is tight. Hence, to establish the desired convergence in distribution, it is now enough to check the convergence of the finite dimensional laws, i.e.\ to show that for all $z_1,\ldots,z_m \in \C$ fixed we have 
        \begin{equation}
            \label{eq:convergence_in_law_for_finite_dim_vectors}
            \big(F_n(z_1),\ldots, F_n(z_m)\big) \xrightarrow[n\to \infty]{ \ d \ } \big(G(z_1),\ldots,G(z_m)\big) \, ,
        \end{equation}
        where the convergence is in distribution as random vectors in $\C^m$. Claim~\ref{claim:correlations_converge} shows the corresponding correlation matrices in~\eqref{eq:convergence_in_law_for_finite_dim_vectors} converge. 
        By the Cram\'er-Wold device, the stated convergence in~\eqref{eq:convergence_in_law_for_finite_dim_vectors} will follow once we show that for all $\gamma_1,\ldots,\gamma_m \in \C$ we have 
        \begin{equation}
            \label{eq:convergence_in_law_finite_dimension_after_cramer_wold}
            \sum_{j=1}^m  \Re \big(\gamma_j \,  F_n(z_j) \big) \xrightarrow[n\to \infty]{ \ d \ } \sum_{j=1}^m  \Re \big(\gamma_j \,  G(z_j) \big)  \, .
        \end{equation}
        Indeed, note that the left-hand side of~\eqref{eq:convergence_in_law_finite_dimension_after_cramer_wold} can be written as
        \[
        \sum_{j=1}^m  \Re \big(\gamma_j \,  F_n(z_j) \big) = \frac{1}{\sqrt{n} }\Re\Big( \sum_{k=0}^n \xi_k \, v_k\Big) \qquad \text{ where } \qquad v_k = \zeta_0^k \sum_{j=1}^m \gamma_j \Big(1 + \frac{z_j}{n}\Big)^k \, .
        \]
        That is, the left-hand side of~\eqref{eq:convergence_in_law_finite_dimension_after_cramer_wold} is a large sum of independent random variables. By the Lindeberg central limit theorem~\cite[Theorem~3.4.10]{Durrett-book} for triangular arrays (noting that $\{v_k\}$ is uniformly bounded given $\{\gamma_j\}$ and $\{z_j\}$) we see the convergence~\eqref{eq:convergence_in_law_finite_dimension_after_cramer_wold}, and hence also~\eqref{eq:convergence_in_law_for_finite_dim_vectors} and we are done. 
     \end{proof}
     \noindent
     The following formulation of the continuous mapping theorem will be convenient for us.
    \begin{fact}[{\cite[Theorem 2.7]{Billingsley}}]
    \label{fact:cont_mapping_theorem}
        Suppose that $\mathcal{S}$ is a metric space and that $\{X_n\}_{n\ge 1}$ and $X$ are random elements in $\mathcal{S}$ such that $X_{n} \xrightarrow{ \ d \ } X$ as $n\to \infty$. Let $h:\mathcal{S} \to \mathcal{S}^\prime$ be a measurable map to another metric space $\mathcal{S}^\prime$, and denote by $D_h$ the set of discontinuity points of $h$. If $X\not\in D_h$ almost surely, then we also have $h(X_{n}) \xrightarrow{ \ d \ } h(X)$ as $n\to\infty$. 
    \end{fact}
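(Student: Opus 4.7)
The plan is to prove this via the portmanteau characterization of convergence in distribution: it suffices to show that for every closed set $F \subseteq \mathcal{S}^\prime$,
\[
\limsup_{n\to\infty} \P\big(h(X_n) \in F\big) \le \P\big(h(X) \in F\big) \, .
\]
Rewriting the left-hand side as $\P(X_n \in h^{-1}(F))$, the difficulty is that $h^{-1}(F)$ need not be closed in $\mathcal{S}$, so the portmanteau theorem for $X_n \xrightarrow{d} X$ cannot be applied directly. I would circumvent this by passing to the closure: trivially $\P(X_n \in h^{-1}(F)) \le \P(X_n \in \overline{h^{-1}(F)})$, and since $\overline{h^{-1}(F)}$ is closed, portmanteau gives $\limsup_n \P(X_n \in \overline{h^{-1}(F)}) \le \P(X \in \overline{h^{-1}(F)})$.

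The key topological step is then the inclusion
\[
\overline{h^{-1}(F)} \subseteq h^{-1}(F) \cup D_h \, .
\]
To see this, take $x \in \overline{h^{-1}(F)}$ and pick $x_k \in h^{-1}(F)$ with $x_k \to x$; if $x \notin D_h$ then $h(x_k) \to h(x)$, and since $F$ is closed and $h(x_k) \in F$, it follows that $h(x) \in F$, i.e.\ $x \in h^{-1}(F)$. Combining with the assumption $\P(X \in D_h) = 0$,
\[
\P\big(X \in \overline{h^{-1}(F)}\big) \le \P\big(X \in h^{-1}(F)\big) + \P(X \in D_h) = \P\big(h(X) \in F\big) \, ,
\]
which closes the argument.

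The main obstacle is this topological inclusion, since it is the only place where the hypothesis $\P(X \in D_h) = 0$ enters — everything else is essentially a restatement of portmanteau. A related technical point to address is that $D_h$ must be shown to be a Borel set for $\P(X \in D_h)$ to be well defined; this is standard (for $h$ between metric spaces, $D_h$ is an $F_\sigma$, obtained as the union over $m$ of the closed sets where the oscillation of $h$ is at least $1/m$), and I would mention it in passing rather than prove it in detail. Finally, one should remark that measurability of $h(X_n)$ and $h(X)$ requires $h$ itself to be Borel measurable, which is part of the hypothesis.
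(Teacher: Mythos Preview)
Your argument is correct and is exactly the standard portmanteau-based proof (as in Billingsley, Theorem~2.7). Note, however, that the paper does not supply its own proof of this fact: it is simply quoted as a citation, so there is nothing to compare against beyond observing that you have reproduced the classical proof.
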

    \begin{claim} \label{claim:convergence_in_law_for_zeros}
        We have $\mathcal{Z}_{F_n} \xrightarrow[n\to \infty]{ \ d \ } \mathcal{Z}_G$ in distribution as random elements of {\sf Conf}.
    \end{claim}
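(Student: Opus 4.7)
The plan is to apply the continuous mapping theorem (Fact~\ref{fact:cont_mapping_theorem}) to the convergence $F_n \xrightarrow{d} G$ from Lemma~\ref{lemma:convergence_in_dist_for_functions}, with the zero-set map $h:\cE \to {\sf Conf}$ defined by $h(g) = \mathcal{Z}_g$ for $g\not\equiv 0$ and $h(0)=\emptyset$. Once I verify that the set of discontinuity points $D_h$ is contained in $\{0\}$ and that $\P(G \equiv 0)=0$, the claim follows immediately.

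The main step is to show continuity of $h$ at every non-zero entire function. Concretely, suppose $g_n \to g$ uniformly on compact subsets of $\C$ with $g \not\equiv 0$. To prove vague convergence $\mathcal{Z}_{g_n} \to \mathcal{Z}_g$ in {\sf Conf}, I would take an arbitrary continuous compactly supported test function $\Phi:\C \to \R$ and pick $R>0$ large enough that $\supp(\Phi)\subset \{|z|<R\}$ and such that $g$ has no zeros on the circle $\{|z|=R\}$ (the set of such $R$ is co-countable so this is always possible). By Hurwitz's theorem, for all sufficiently large $n$ the number of zeros of $g_n$ inside $\{|z|<R\}$ agrees with that of $g$, and these zeros can be enumerated so as to converge pointwise to the zeros of $g$. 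Continuity of $\Phi$ then gives $\sum_{\alpha\in \mathcal{Z}_{g_n}}\Phi(\alpha)\to \sum_{\alpha\in \mathcal{Z}_g}\Phi(\alpha)$, which is exactly vague convergence of the associated atomic measures. Thus $D_h \subset \{0\}$.

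Finally, $G(0) = \int_0^1 dB_\C(t)$ is a non-degenerate complex Gaussian, so $\P(G(0)=0)=0$, and in particular $\P(G\equiv 0)=0$. Invoking Fact~\ref{fact:cont_mapping_theorem} with $\mathcal{S}=\cE$, $\mathcal{S}^\prime={\sf Conf}$, and $X_n = F_n$, $X=G$, together with Lemma~\ref{lemma:convergence_in_dist_for_functions}, yields the desired $\mathcal{Z}_{F_n} \xrightarrow{d} \mathcal{Z}_G$ in {\sf Conf}.

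The only subtle point is the articulation of Hurwitz's theorem in the language of vague convergence; everything else is essentially bookkeeping. I do not anticipate any genuine obstacle, as all the analytic input has already been packaged in Lemma~\ref{lemma:convergence_in_dist_for_functions}.
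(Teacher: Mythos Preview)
Your proposal is correct and follows essentially the same approach as the paper: apply the continuous mapping theorem (Fact~\ref{fact:cont_mapping_theorem}) to $F_n \xrightarrow{d} G$ using that the zero-set map is continuous at every nonzero entire function by Hurwitz's theorem, together with $\P(G\equiv 0)=0$. The paper states this in one sentence while you spell out the Hurwitz argument and the reason $G\not\equiv 0$ in more detail, but the content is the same.
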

    \begin{proof}
        Hurwitz's theorem implies that the map $\mathcal{Z}:\cE\to {\sf Conf}$ is continuous at any $g\in \cE$ which is not identically zero. Since $G$ is almost surely a non-trivial entire function, Lemma~\ref{lemma:convergence_in_dist_for_functions} together with the continuous mapping theorem\footnote{We note that in the application of the continuous mapping theorem we use that both $\cE$ and ${\sf Conf}$ are metrizable.} (Fact~\ref{fact:cont_mapping_theorem}) implies what we want. 
    \end{proof}
    \noindent
    We say that $\Lambda\in {\sf Conf}$ is \emph{simple} if it has no multiplicities, i.e.\ if the corresponding atomic measure gives at most unit mass to singletons. 
    \begin{fact}
        \label{fact:G_has_no_double_zeros}
        Almost surely $\mathcal{Z}_G$ is simple. In other words, $G$ has only simple zeros. 
    \end{fact}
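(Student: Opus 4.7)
The plan is to show that the pair $(G, G')$ almost surely has no common zero in $\C$, which is equivalent to the claim. Since the event $\{G$ has a multiple zero$\}$ is the countable union over $R\in \N$ of $\{G$ has a multiple zero in $\{|z|\le R\}\}$, it suffices to fix $R>0$ and show the latter event has probability zero.

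First I would verify the key input: that $(G(z), G'(z))$ is a non-degenerate complex Gaussian vector for each $z\in \C$. Direct differentiation of $K(z,w) = \int_0^1 e^{(z+\overline w)t}\,{\rm d}t$ in $z$ and $\overline w$ at $(z,z)$ yields the covariance matrix
\[
\Sigma(z) = \begin{pmatrix} \int_0^1 e^{2t\Re z}\,{\rm d}t & \int_0^1 t\, e^{2t\Re z}\,{\rm d}t \\ \int_0^1 t\, e^{2t\Re z}\,{\rm d}t & \int_0^1 t^2\, e^{2t\Re z}\,{\rm d}t \end{pmatrix},
\]
whose determinant is strictly positive by Cauchy--Schwarz applied to the linearly independent functions $1$ and $t$ in $L^2([0,1], e^{2t\Re z}\,{\rm d}t)$. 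Combined with $\E[G(z)G'(w)]=0$ (which follows directly from the definition of $G$), this implies that the joint real density of $(G(z), G'(z))$ on $\R^4$ is bounded uniformly for $z$ in any compact subset of $\C$.

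The main step is a covering argument. Cover $\{|z|\le R\}$ by $N = O(\eps^{-2})$ disks $D(z_j,\eps)$ of radius $\eps\in(0,1)$. If $G$ has a multiple zero $\zeta\in D(z_j,\eps)$, then $G(\zeta) = G'(\zeta)=0$, so the mean-value inequality (applied first to $G'$ using $G''$, then to $G$ using $G'$) gives
\[
|G'(z_j)|\le 2\eps M \qquad \text{and} \qquad |G(z_j)|\le 2\eps^2 M,
\]
where $M := \max_{|z|\le R+1}|G''(z)|$. Using Cauchy's integral formula to bound $G''$ in terms of $G$, together with the same moment estimate used in Claim~\ref{claim:F_n_and_derivatives_are_controlled} (which applies verbatim to $G$ by Fatou/monotone convergence), we have $\E[M]<\infty$. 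Hence for any $\delta>0$ there exists $M_0 = M_0(\delta,R)$ with $\P(M>M_0)\le\delta$, and on the complementary event the bounded-density estimate yields
\[
\P\big(|G(z_j)|\le 2\eps^2 M_0,\ |G'(z_j)|\le 2\eps M_0\big) = O(\eps^6 M_0^4).
\]
A union bound over the $N=O(\eps^{-2})$ disks gives $O(\eps^4 M_0^4)$, so letting $\eps\to 0$ (with $\delta$ and $M_0$ fixed) and then $\delta\to 0$ completes the proof.

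The only mild obstacle is juggling the random quantity $M$ (controlling the derivative sup) and the small-ball probability of $(G(z_j), G'(z_j))$ simultaneously; this is resolved cleanly by conditioning on the good event $\{M\le M_0\}$ and taking the two limits in the correct order.
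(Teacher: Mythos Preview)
Your proof is correct. Both you and the paper first restrict to $\{|z|\le R\}$, bound $\max|G''|$ by a constant $M_0$ on a high-probability event, and then use a Gaussian small-ball estimate to show double zeros are unlikely. The execution, however, is genuinely different.

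The paper uses only the one-dimensional small-ball bound for $G(w)$: from $K(w,w)>0$ one gets $\P(|G(w)|\le M\delta^2)=O_{M,R}(\delta^4)$, and by Fubini the set $S=\{|G|\le M\delta^2\}\cap\{|w|\le 2R\}$ has $\E[\mathrm{Area}(S)]=O(\delta^4)$. Since a double zero forces $S$ to contain a disk of area $\pi\delta^2$, Markov finishes the argument. You instead verify that the \emph{pair} $(G(z),G'(z))$ is jointly non-degenerate (via the Cauchy--Schwarz computation on $\det\Sigma(z)$), cover $\{|z|\le R\}$ by $O(\eps^{-2})$ grid points, and apply the four-dimensional density bound at each grid point followed by a union bound. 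Your route requires an extra (easy) linear-algebra check but avoids the area/Fubini trick; the paper's route needs only the scalar non-degeneracy $K(z,z)>0$ but is perhaps less transparent about why the exponent works out. Both give the same $O(\eps^4)$ (resp.\ $O(\delta^2)$) decay after the union bound/Markov step, and both handle the random $M$ by fixing $M_0$ first and taking limits in the right order.
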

    \noindent
    Fact~\ref{fact:G_has_no_double_zeros} follows immediately from~\cite[Lemma~2.4.1]{GAFbook}, after noting that $K(z,z) > 0$ for all $z\in \C$. For the reader's convenience, we provide a simple proof below.
    \begin{proof}[Proof of Fact~\ref{fact:G_has_no_double_zeros}]
    It is enough to show that for each $R,\eps > 0$, $G$ has no double zeros in $\{|z|\le R\}$ with probability at least $1 - \eps$.  Let $M \geq 0$ be so that $\P\big(\max_{|z|\le 2R}|G''(z)| > M\big) \leq \eps/2$.  If $\displaystyle \max_{|z|\le 2R}|G''(z)| \leq M$, Taylor's theorem implies that if $G(z) = G^\prime(z) = 0$ then $$|G(w)| \leq M |w - z|^2 \, , \qquad \text{for} \quad  |w-z|\le R/2 \, .$$ In particular, for each $\delta>0$ there is a disk of radius $\delta$ about each double zero for which we have $|G(w)| \leq M \delta^2$. On the other hand, setting $S = \{w : |G(w)| \leq M \delta^2 \ \text{and} \ |w|\le 2R\}$ a simple computation shows that $\E[\text{Area}(S)] = O_{M,R}(\delta^4)$. Markov's inequality now implies that $\P(\text{Area}(S) \ge \pi \delta^2) = O(\delta^2) $, and by taking $\delta = O_{M,R}(\eps)$ we get that $G$ has no double zero in $\{|z|\le R\}$ with probability at least $1-\eps$, completing the proof.
    \end{proof}
    \noindent
    After establishing the convergence in law for the random zero sets, we now turn to upgrade this convergence to the corresponding convergence of the $k$-point measures. For that, we fix $k\ge 1$ and some $\Phi:\C^k \to \R$ a continuous test function with compact support. Define $T_{\Phi}:{\sf Conf} \to \R$ via
    \begin{equation} \label{eq:def_of_T_phi}
        T_\Phi(\Lambda) = \sum_{(\alpha_1,\ldots,\alpha_k) \in \Lambda_{\not=}^k} \Phi(\alpha_1,\ldots,\alpha_k) \, ,
    \end{equation}
    where we recall that $\Lambda_{\not=}^k$ is given by~\eqref{eq:def_of_La_k_without_diagonal}.
    \begin{lemma}
        \label{lemma:continuity_of_T_phi_on_simple}
        Suppose that $\Lambda\in {\sf Conf}$ is simple. Then for each $k\ge 1$ and $\Phi:\C^k \to \R$ continuous with compact support, $T_\Phi$ is continuous at $\Lambda$. 
    \end{lemma}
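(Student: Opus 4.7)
The plan is to reduce the statement to a finite sum: since $\Phi$ is compactly supported and $\Lambda$ is locally finite, $T_\Phi(\Lambda)$ only depends on the finitely many points of $\Lambda$ in some compact region, and simplicity will let us track each of them individually under vague convergence.

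\textbf{Setup.} Suppose $\Lambda_n \to \Lambda$ in ${\sf Conf}$. Choose a compact set $L\subset \C$ large enough that $\supp(\Phi) \subset L^k$, and then choose a bounded open set $U\supset L$ whose boundary $\partial U$ contains no point of $\Lambda$ (permissible because $\Lambda$ is locally finite). Enumerate the finitely many points of $\Lambda$ in $\overline{U}$ as $\lambda_1,\ldots,\lambda_N$; by simplicity they are all distinct. Pick $r>0$ so small that the closed disks $\overline{B(\lambda_i,r)}$ are pairwise disjoint, all contained in $U$, and so that no points of $\Lambda$ lie on the circles $\partial B(\lambda_i,r)$ (again by local finiteness).

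\textbf{Point matching via vague convergence.} I will use the standard characterization of vague convergence of locally finite integer-valued measures: $\Lambda_n(V)\to \Lambda(V)$ for every relatively compact Borel set $V$ with $\Lambda(\partial V)=0$. Applied to $V = B(\lambda_i,r)$ we get $\Lambda_n(B(\lambda_i,r)) = 1$ for all large $n$, and applied to $V = U \setminus \bigcup_i \overline{B(\lambda_i, r)}$ we get $\Lambda_n(V)=0$ for all large $n$. Thus, for large $n$, the set $\Lambda_n\cap \overline{U}$ consists of exactly one point $\lambda_i^{(n)}\in B(\lambda_i,r)$ for each $i=1,\ldots,N$. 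Running this argument with $r\to 0$ (through a sequence), a standard diagonal argument shows $\lambda_i^{(n)}\to \lambda_i$ as $n\to\infty$ for each $i$.

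\textbf{Passing to the limit.} Since $\supp(\Phi)\subset L^k\subset \overline{U}^k$, for all sufficiently large $n$ the only $k$-tuples in $(\Lambda_n)_{\not=}^k$ that contribute to $T_\Phi(\Lambda_n)$ are tuples of distinct elements from $\{\lambda_1^{(n)},\ldots,\lambda_N^{(n)}\}$. Hence
\[
T_\Phi(\Lambda_n) \;=\; \sum_{(i_1,\ldots,i_k)\text{ distinct}} \Phi\bigl(\lambda_{i_1}^{(n)},\ldots,\lambda_{i_k}^{(n)}\bigr),
\]
a finite sum with the \emph{same} finite index set as the corresponding expression for $T_\Phi(\Lambda)$. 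By continuity of $\Phi$, each summand converges to $\Phi(\lambda_{i_1},\ldots,\lambda_{i_k})$, and so $T_\Phi(\Lambda_n)\to T_\Phi(\Lambda)$, proving continuity of $T_\Phi$ at $\Lambda$. The only mildly delicate step is the point-matching in the second paragraph — ensuring exactly one $\Lambda_n$-point enters each small disk around a $\lambda_i$, with no stray points appearing elsewhere in $L$ — but this is exactly where simplicity of $\Lambda$ is used and is cleanly handled by choosing the disks and the open neighborhood $U$ to have $\Lambda$-null boundaries.
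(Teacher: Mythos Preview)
Your proof is correct and follows essentially the same approach as the paper's: both restrict to a compact region via the support of $\Phi$, invoke a Portmanteau-type characterization of vague convergence to show that for large $n$ the points of $\Lambda_n$ in that region match those of $\Lambda$ one-to-one (this is where simplicity enters), and then conclude by continuity of $\Phi$. The paper derives the Portmanteau statement explicitly via tightness and weak convergence of restrictions while you cite it as standard, and your claim about $\Lambda_n\cap \overline{U}$ should technically be about $\Lambda_n\cap U$ (points on $\partial U$ are not excluded), but these are cosmetic differences that do not affect correctness.
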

    \begin{proof}
        We need to show that for every $\{\Lambda_n\} \subset {\sf Conf}$ such that $\Lambda_n \to \Lambda$ vaguely as $n\to \infty$, we have 
        $T_{\Phi}(\Lambda_n) \xrightarrow{n\to \infty} T_{\Phi}(\Lambda) \, .$
        We denote by $\mu_n$ and $\mu$ the counting measures for $\Lambda_n$ and $\Lambda$, respectively (that is, when a unit delta mass is placed at each point of the configuration). Let $K\subset \C$ be a compact set such that $\text{supp}(\Phi) \subset K^k$, and assume without loss of generality that $\mu(\partial K) = 0$. Let $f:\C\to [0,1]$ be a continuous function such that $f \equiv 1$ on $K$ and has compact support. Then vague convergence of $\Lambda_n$ to $\Lambda$ implies that
        \[
        \mu_n(K) \le \int_{\C} f \, {\rm d}\mu_n \xrightarrow{n\to \infty} \int_{\C} f \, {\rm d}\mu < \infty \, .
        \]
        Therefore, the sequence of finite measures $\mu_n \large|_K$ is tight, and in fact $\mu_n\large|_K\xrightarrow{n\to\infty} \mu\large|_K$ in the sense of weak convergence\footnote{Recall this is a slightly stronger notion, as we are allowed to test against \emph{bounded} continuous functions rather than just \emph{compactly support} continuous functions.}. In particular, the Portmanteau theorem now implies that
        $ \displaystyle       \lim_{n\to \infty}\mu_n(B) = \mu(B) \, ,$ 
        for any Borel set $B\subset K$ such that $\mu(\partial B) = 0$. This means that for all $n$ large enough we must have $\mu_n(K) = \mu(K)$.  Write $\Lambda|_K = \{\lambda_1,\ldots,\lambda_N\}$ with $N = \mu(K)$ and note that for each fixed $\eps > 0$ sufficiently small, we may take  $B = B_\eps(\lambda_j)$ and see $\mu_n(B_\eps) \to \mu(B)$.  In particular, this means for each $\eps > 0$ sufficiently small, for all $n$ large enough we may list $\Lambda_n|_K = \{\lambda_1^{(n)},\ldots,\lambda_N^{(n)}\}$ so that $|\lambda_j^{(n)} - \lambda_j| \leq \eps$.  By continuity of $\Phi$ this implies $T_\Phi(\Lambda_n) \to T_\Phi(\Lambda)$ as $n\to\infty$.
    \end{proof}
    \begin{claim}
        \label{claim:convergence_in_law_of_T_phi}
        Let $T_\Phi$ be given by~\eqref{eq:def_of_T_phi}. We have
        $
        T_{\Phi}(\mathcal{Z}_{F_n}) \xrightarrow[n\to \infty]{ \ d \ } T_\Phi(\mathcal{Z}_G) \, .
        $
    \end{claim}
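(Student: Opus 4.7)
The plan is to apply the continuous mapping theorem (Fact~\ref{fact:cont_mapping_theorem}) directly, with $\mathcal{S} = \mathsf{Conf}$, $\mathcal{S}^\prime = \R$, $h = T_\Phi$, $X_n = \mathcal{Z}_{F_n}$, and $X = \mathcal{Z}_G$. All of the ingredients have essentially been set up already, so the argument should be almost a one-liner.

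First, Claim~\ref{claim:convergence_in_law_for_zeros} provides the hypothesis that $\mathcal{Z}_{F_n} \xrightarrow{d} \mathcal{Z}_G$ in $\mathsf{Conf}$. Next, I would verify the hypothesis on discontinuities: by Lemma~\ref{lemma:continuity_of_T_phi_on_simple}, the set $D_{T_\Phi}$ of discontinuity points of $T_\Phi$ is contained in the set of non-simple configurations in $\mathsf{Conf}$. Fact~\ref{fact:G_has_no_double_zeros} tells us that $\mathcal{Z}_G$ is simple almost surely, so $\mathcal{Z}_G \notin D_{T_\Phi}$ almost surely, which is exactly the condition required by Fact~\ref{fact:cont_mapping_theorem}.

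The only loose end is measurability of $T_\Phi:\mathsf{Conf}\to\R$, which is needed to invoke the continuous mapping theorem. I would handle this by a quick sanity check: the set of simple configurations $\mathsf{Conf}_{\rm simple}\subset \mathsf{Conf}$ is Borel (it can be written as a countable intersection of events of the form ``the measure $\mu$ assigns mass $\leq 1$ to each closed disk of rational radius and rational center, up to a negligible modification''), and on $\mathsf{Conf}_{\rm simple}$ the map $T_\Phi$ is continuous by Lemma~\ref{lemma:continuity_of_T_phi_on_simple}; outside this set we may set $T_\Phi$ to any convenient value (e.g. by ordering the points arbitrarily and summing $\Phi$ over distinct tuples), which is still Borel-measurable since the support of $\Phi$ is compact so only finitely many points contribute. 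The main and essentially only ``obstacle'' is thus bookkeeping for measurability; the substantive work was done in Claim~\ref{claim:convergence_in_law_for_zeros}, Fact~\ref{fact:G_has_no_double_zeros} and Lemma~\ref{lemma:continuity_of_T_phi_on_simple}. Applying Fact~\ref{fact:cont_mapping_theorem} then yields $T_\Phi(\mathcal{Z}_{F_n}) \xrightarrow{d} T_\Phi(\mathcal{Z}_G)$, as desired.
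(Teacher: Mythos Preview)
Your proposal is correct and follows exactly the same route as the paper: apply the continuous mapping theorem (Fact~\ref{fact:cont_mapping_theorem}) to $T_\Phi$, using Claim~\ref{claim:convergence_in_law_for_zeros} for the convergence $\mathcal{Z}_{F_n}\xrightarrow{d}\mathcal{Z}_G$, Lemma~\ref{lemma:continuity_of_T_phi_on_simple} for continuity of $T_\Phi$ at simple configurations, and Fact~\ref{fact:G_has_no_double_zeros} to ensure $\mathcal{Z}_G$ almost surely avoids the discontinuity set. The only difference is that you add a measurability check for $T_\Phi$, which the paper leaves implicit.
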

    \begin{proof}
        Fact~\ref{fact:G_has_no_double_zeros} implies that $\mathcal{Z}_G$ is almost surely simple, and hence Lemma~\ref{lemma:continuity_of_T_phi_on_simple} implies that almost surely $\mathcal{Z}_G$ is a continuity point of $T_{\Phi}$. The claim now follows by applying the continuous mapping theorem with $h = T_{\Phi}$, which gives the desired convergence via Claim~\ref{claim:convergence_in_law_for_zeros}.
    \end{proof}
    \subsection{Uniform integrability}
    To prove Theorem~\ref{thm:main_result}, we need to upgrade the stated convergence in Claim~\ref{claim:convergence_in_law_of_T_phi} to a statement about convergence of expectations. For that, we establish uniform integrability via Lemma~\ref{lemma:uniform_bound_on_k_moments} below. 
    For $R\ge 1$ we denote by $\D_R = \{ |z| \le R \}$.
     \begin{lemma}
        \label{lemma:uniform_bound_on_k_moments}
        For all $R\ge 1$ and $k\ge 1$ we have
        $ \displaystyle
        \sup_{n\ge 1} \E\big[ \mathcal{Z}_{F_n}\big( \D_R \big)^k \big] < \infty \, .
        $
    \end{lemma}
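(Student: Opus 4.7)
The plan is to deduce Lemma~\ref{lemma:uniform_bound_on_k_moments} from Jensen's formula combined with the two ingredients already at hand: the upper bound on $\max|F_n|$ from Claim~\ref{claim:F_n_and_derivatives_are_controlled}, and the Esseen-type anti-concentration bound in Claim~\ref{claim:esseen_small_ball_for_vectors} applied to $F_n(0)$. Applying Jensen's formula to $F_n$ on the disk of radius $eR$ centered at $0$, keeping only those zeros inside $\D_R$ and using that $\log(eR/|\alpha|)\geq 1$ whenever $|\alpha|\leq R$, one obtains
\begin{equation*}
\mathcal{Z}_n(\D_R) \;\leq\; \log M_n - \log|F_n(0)| \;\leq\; (\log M_n)_+ + (\log 1/|F_n(0)|)_+,
\end{equation*}
where $M_n := \max_{|z|\leq eR}|F_n(z)|$. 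Raising to the $k$-th power and using $(a+b)^k \leq 2^{k-1}(a^k+b^k)$ for $a,b\geq 0$, the lemma reduces to showing that $\E[(\log M_n)_+^k]$ and $\E[(\log 1/|F_n(0)|)_+^k]$ are bounded uniformly in $n$.

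Controlling $\E[(\log M_n)_+^k]$ is immediate: Claim~\ref{claim:F_n_and_derivatives_are_controlled} applied with radius $eR$ gives $\E[M_n]\leq e^{eR}$, and Markov's inequality then yields the exponential tail bound $\P(\log M_n > s)\leq e^{eR-s}$, which controls every polynomial moment uniformly in $n$. For the second term, write $F_n(0) = \tfrac{1}{\sqrt{n}}\sum_{k=0}^n \zeta_0^k\,\xi_k$ as a normalized weighted sum of i.i.d.\ random variables in $\C\cong\R^2$ and apply Claim~\ref{claim:esseen_small_ball_for_vectors} to obtain an anti-concentration estimate of the form $\P(|F_n(0)|\leq t)\leq Ct^2$ for a constant $C$ independent of $n$. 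The desired moment bound then follows by tail integration:
\begin{equation*}
\E\big[(\log 1/|F_n(0)|)_+^k\big] \;=\; \int_0^\infty k s^{k-1}\P\big(|F_n(0)|<e^{-s}\big)\,ds \;\leq\; \int_0^\infty Cks^{k-1}e^{-2s}\,ds \;<\; \infty.
\end{equation*}

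The main obstacle is verifying the hypotheses of Esseen's small-ball bound when the underlying law of $\xi_0$ is itself degenerate as an $\R^2$-valued random variable, for instance when $\xi_0$ is real-valued and thus supported on a line. In that case the two-dimensional non-degeneracy of $F_n(0)$ must be provided entirely by the phases $\{\zeta_0^k\}_{k=0}^n$, and this is precisely where the standing assumption $\theta\in(0,\pi)$ enters: it guarantees that these phases span $\R^2$ in a uniformly non-degenerate way, so that the constant $C$ in the Esseen bound can be chosen independently of $n$.
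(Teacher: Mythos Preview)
Your approach via Jensen's formula has the right shape, but the anti-concentration step contains a genuine gap. The bound $\P(|F_n(0)|\le t)\le Ct^2$ that you claim does \emph{not} follow from Claim~\ref{claim:esseen_small_ball_for_vectors}, and is in fact false under the sole hypothesis~\eqref{eq:assumotion_mean_zero_variance_1}. Take for instance $\xi_k$ Rademacher and $\zeta_0=i$ (so $\theta=\pi/2$): then $\sqrt{n}\,F_n(0)=\sum_k i^k\xi_k$ has an atom at $0$ of mass of order $1/n$, so $\P(|F_n(0)|\le t)\ge c/n$ for \emph{every} $t>0$, and no bound of the form $Ct^2$ can hold uniformly in $n$. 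More generally, Claim~\ref{claim:esseen_small_ball_for_vectors} applied at a single point with $d=2$ only yields $\P\big(\|\sum_j\xi_jV_j\|\le 1\big)\le Cn^{-1}$, i.e.\ $\P(|F_n(0)|\le n^{-1/2})\le Cn^{-1}$; it gives no further decay below the scale $n^{-1/2}$. Consequently your tail integral $\int_0^\infty ks^{k-1}\P(|F_n(0)|<e^{-s})\,ds$ diverges, and indeed $\E[(\log 1/|F_n(0)|)_+^k]=\infty$ whenever $F_n(0)$ has an atom at $0$.

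The missing idea is that one must drive the probability of the ``bad'' event down to $O(n^{-k})$ or smaller, because on that event the only available bound is the trivial $\mathcal{Z}_n(\D_R)\le n$. A single evaluation point yields at best $\P(\text{bad})\le Cn^{-1}$, and $n^k\cdot n^{-1}=n^{k-1}$ is unbounded for $k\ge 2$. The paper's proof resolves this by evaluating $F_n$ at $4k$ well-separated points $i\varphi_1,\ldots,i\varphi_{4k}$ and applying Claim~\ref{claim:esseen_small_ball_for_vectors} in dimension $d=4k$; this gives $\P\big(|F_n(i\varphi_j)|\le 1\text{ for all }j\big)\le C_k n^{-2k}$, so the bad event contributes at most $n^k\cdot n^{-2k}=n^{-k}$. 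On the complementary event some $|F_n(i\varphi_j)|\ge 1$, and Jensen's formula centered there yields $\mathcal{Z}_n(\D_R)\le C\max_{|z|\le 4R}\log|F_n(z)|$, which is then handled by Claim~\ref{claim:F_n_and_derivatives_are_controlled} exactly as you outline.
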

    \noindent
    To prove Lemma~\ref{lemma:uniform_bound_on_k_moments}, we will need a classical small-ball bound for sum of non-degenerate random vectors, which follows essentially from the work of Esseen~\cite{Esseen1968}.  We reprove it here for completeness.
    \begin{claim} \label{claim:esseen_small_ball_for_vectors}
    Let $V_0,V_1,\ldots,V_n \in \R^d$ be vectors with $\|V_j\| \leq M$ for all $j$. Define the matrix $\Sigma_n = \frac{1}{n} \sum_{j = 0 }^n V_j V_j^T \in \R^{d \times d}$ and suppose $\langle \eta, \Sigma_n \eta\rangle \geq \kappa \|\eta\|^2$ for all $\eta \in \R^d$.  
    Suppose that $\xi_j$ are i.i.d.\ non-degenerate random variables in $\C$, all with common distribution $\xi$.  Then
    $$\P\Big(\big\|\sum_{j=0}^n \xi_j V_j \big\| \leq 1\Big) \le C \, n^{-d/2}  \, , $$
    where $C>0$ depends only on $d, \kappa, M$ and $\xi$.
    \end{claim}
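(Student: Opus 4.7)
The plan is to proceed via Fourier inversion, following Esseen's classical approach.

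First I would reduce to the case of a real-valued scalar. Since $\xi$ is non-degenerate in $\C\cong \R^2$, some projection $X := \Re(e^{i\theta}\xi)$ is a non-degenerate real random variable. Replacing each $\xi_j$ by $e^{i\theta}\xi_j$ only rotates $\sum_j \xi_j V_j$, preserving its norm, and $\|\sum_j \xi_j V_j\|\ge \|\sum_j X_j V_j\|$ with $X_j:=\Re(e^{i\theta}\xi_j)$; so it suffices to prove the upper bound for i.i.d.\ real non-degenerate $X_j$.

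Next I would invoke the $d$-dimensional Esseen concentration inequality
$$\P(\|S\|\le 1) \le C_d \int_{\|t\|\le 1}|\phi_S(t)|\,dt, \qquad \phi_S(t) = \prod_{j=0}^n \phi_X(\langle t, V_j\rangle),$$
with $S = \sum_j X_j V_j$, and estimate the integrand in two regimes. A standard symmetrization argument (using $1-\cos u \ge u^2/4$ for $|u|\le 1$ and non-degeneracy of $X$) yields constants $c_1,\delta_1>0$ with $|\phi_X(s)|\le e^{-c_1 s^2}$ for $|s|\le \delta_1$. In the inner region $\|t\|\le \delta_1/M$ every $|\langle t, V_j\rangle|\le \delta_1$, and the hypothesis $\langle \eta, \Sigma_n \eta\rangle \ge \kappa\|\eta\|^2$ gives
$$|\phi_S(t)|\le \exp\bigl(-c_1 n\langle t, \Sigma_n t\rangle\bigr)\le \exp(-c_1\kappa n\|t\|^2),$$
whose integral over this ball contributes of order $n^{-d/2}$, exactly the claimed bound.

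For the annular region $\delta_1/M\le \|t\|\le 1$, I would argue that $|\phi_S(t)|$ is in fact exponentially small in $n$. The lower bound $\sum_j \langle t, V_j\rangle^2\ge n\kappa\|t\|^2$ together with the boundedness $|\langle t, V_j\rangle|\le M$ forces at least $cn$ indices $j$ to satisfy $|\langle t, V_j\rangle|\in [c_3, M]$ for some $c_3>0$ depending only on $\kappa, M, \delta_1$. If $|\phi_X|$ is bounded uniformly away from $1$ on this compact range, then each such factor in $\phi_S(t)$ contributes at most $1-c_2$, giving $|\phi_S(t)|\le e^{-c_2' n}$, which easily dominates $n^{-d/2}$.

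The main obstacle is the last point: for lattice-valued $X$, $|\phi_X|$ can return to $1$ at nonzero frequencies and the uniform bound on $[c_3,M]$ may fail. Since $C$ is allowed to depend on $\xi$, one can circumvent this either by shrinking $\delta_1$ (and hence $c_3$) so that the relevant range of frequencies lies strictly inside the first period of $|\phi_X|$, or more robustly by replacing $X$ by a symmetrized and slightly smoothed version whose characteristic function has the required decay on $[-M,M]$ up to a small loss. In either case, the contribution from the outer annulus is negligible compared to the $n^{-d/2}$ bound obtained in the inner region, completing the proof.
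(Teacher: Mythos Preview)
Your reduction to real scalars and the inner-region computation match the paper's proof exactly: Esseen's inequality plus the Gaussian bound $|\phi_X(s)|\le e^{-c_1 s^2}$ for small $|s|$, combined with the lower bound $\langle t,\Sigma_n t\rangle\ge\kappa\|t\|^2$, gives the $n^{-d/2}$ contribution from $\|t\|\le \delta_1/M$.

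The gap is in your treatment of the outer annulus $\delta_1/M\le\|t\|\le 1$, and neither proposed fix closes it. Fix (a) moves the wrong endpoint: shrinking $\delta_1$ shrinks $c_3$, but the interval on which you need $|\phi_X|<1$ is $[c_3,M]$, and the upper endpoint $M$ is fixed by the hypothesis. Concretely, take $d=1$, $X=\pm1$ Bernoulli, and $V_j\equiv\pi$ for all $j$ (so $M=\pi$, $\kappa=\pi^2$); then at $t=1$ every $\langle t,V_j\rangle=\pi$ and $|\phi_X(\pi)|=1$, so $|\phi_S(1)|=1$ regardless of $\delta_1$. The integrand simply does not decay on the annulus in this example, even though the claim itself holds. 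Fix (b) is too vague to evaluate: smoothing each $X_j$ changes $S$ by a Gaussian whose norm is of order $\sqrt{n}$, so the standard comparison $\P(\|S\|\le1)\le\P(\|S+Y\|\le2)/\P(\|Y\|\le1)$ gives nothing useful, and smoothing by a single fixed $d$-dimensional Gaussian only multiplies $|\phi_S(t)|$ by a factor bounded below on the annulus.

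The paper avoids the outer annulus entirely by a covering trick you are missing: since $C$ may depend on $d,M,\xi$, one first covers the unit ball by $O_{d,s}(1)$ balls of radius $s$ and uses $\P(\|S\|\le1)\le C_{d,s}\max_w\P(\|S-w\|\le s)$. Applying Esseen at scale $s$ then integrates only over $\|t\|\le s/4$, and choosing $s=s(M,\xi)$ small enough forces every $|\langle t,V_j\rangle|\le sM/4$ into the range where the Gaussian bound on $|\phi_X|$ is valid. There is then no outer region at all.
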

    \begin{proof}
    Since $\xi$ is non-degenerate, either the real or imaginary part of $\xi$ is non-degenerate so assume without loss of generality that the real part of $\xi$ is non-degenerate; then it is sufficient to assume $\xi_j \in \R$. 
    Let $\varphi_\xi(t) = \E[\exp(it\xi)]$ denote the characteristic function of $\xi$.  Esseen's inequality~\cite{Esseen1968} (see also~\cite[Lemma~7.17]{Tao-Vu-Book} for a textbook treatment) shows that for each $s \geq 0$
    \begin{equation} \label{eq:small_ball_bound_after_esseen}
        \max_{w \in \R^d}\P\Big(\big\|\sum_{j=0}^n \xi_j V_j - w\big\| \leq s\Big) \le C_d \int_{\{\|\eta\| \le \frac{s}{4}\}} \Big|\prod_{j=0}^n \varphi_\xi\big(\langle V_j , \eta \rangle \big)\Big| \, {\rm d}m (\eta) \, ,
    \end{equation}
    where $m$ is the Lebesgue measure on $\R^d$.  By covering the unit ball with $O_{d,s}(1)$ many balls of radius $s > 0$, it is sufficient to bound the right-hand-side of \eqref{eq:small_ball_bound_after_esseen} by $O(n^{-d/2})$ for some fixed $s = s(M,d,\xi) > 0$. Let $\xi^\prime$ be an independent copy of the random variable $\xi$. Since $\xi$ has a non-degenerate distribution, there exists $a>0$ so that $\P(a^{-1} \le |\xi -\xi^\prime| \le a) >0$, and we get that 
    \begin{align*}
        \big|\varphi_\xi(t)\big|^2 = \big|\E\cos((\xi-\xi^\prime) t)\big| \le \exp(-c_0t^2)
    \end{align*}
    for some $c_0>0$ depending only on the distribution of $\xi$ and for all $|t|\le \tfrac{1}{4aM}$.  Setting $s = 1/(aM)$ we may bound 
    \begin{align*}
       \int_{\{\|\eta\| \le \frac{s}{4}\}} \Big|\prod_{j=0}^n \varphi_\xi\big(\langle V_j , \eta \rangle \big)\Big| \, {\rm d}m (\eta) &\le \int_{\{\|\eta\| \le \frac{s}{4}\}}  e^{-c\sum_{j=0}^n \langle V_j , \eta\rangle^2} \, {\rm d}m (\eta) =\int_{\{\|\eta\| \le \frac{s}{4}\}}  e^{-c n \, 
        \langle \eta , \Sigma_n\eta\rangle} \, {\rm d}m (\eta) \\ &\le \int_{\{\|\eta\| \le \frac{s}{4}\}}  e^{-c \kappa n \, 
       \|\eta\|^2} \, {\rm d}m (\eta) \le C' n^{-d/2} \, . \qedhere
    \end{align*}
    \end{proof}
    \noindent
    Next, we show how to get Lemma~\ref{lemma:uniform_bound_on_k_moments} from Esseen's bound.
    \begin{proof}[Proof of Lemma~\ref{lemma:uniform_bound_on_k_moments}]
    By possibly increasing $R$ by a factor which depends only on $k\ge 1$, we may assume that there exists $0\le \varphi_1<\varphi_2<\cdots<\varphi_{4k}\le R$ so that
    \[
    \varphi_{j+1} - \varphi_j \ge 8k \, , \qquad \text{for all } \ j=1,\ldots, 4k.
    \]
    Since the map $R\mapsto \mathcal{Z}_{F_n}(\D_R)$ is non-decreasing, proving the conclusion of the lemma with this (possibly larger) $R=R_k$ suffices.  We use Claim \ref{claim:esseen_small_ball_for_vectors} to show that $F_n$ is typically not too small on at least on point of the set $\{i\varphi_j\}_{j=1}^{4k}$. Define
    \[ 
    V_j = \Re\left(\zeta_0^j\cdot \Big(\Big(1+\frac{i\varphi_1}{n}\Big)^j , \ldots, \Big(1+\frac{i\varphi_{4k}}{n}\Big)^j \Big)\right) \in \R^{4k} \, .
    \]
    Note first that $\|V_j\| = O_k(1)$.  To verify the non-singularity condition of Claim~\ref{claim:esseen_small_ball_for_vectors}, we see that $\Sigma_n$ is the covariance matrix for the vector
    $\Big(\Re F_n(i\varphi_1),\ldots,\Re F_n(i\varphi_{4k})\Big) \,.$
    Claim~\ref{claim:correlations_converge} implies that it suffices to show uniform non-singularity when $F_n$ is replaced by $G$. Let us write this matrix $\Sigma(j,j') = \E[\Re G(i \varphi_j) \Re G(i \varphi_{j'})]$.  Note that $\Sigma(j,j) = 1/2$ and for $j \neq j'$ we have 
    \[
    |\Sigma(j,j')| \leq |K(i\varphi_j,i\varphi_{j^{\prime}})| = \Big|\int_{0}^1 e^{it(\varphi_j - \varphi_{j^\prime})} {\rm d}t \Big| \le \frac{2}{|\varphi_j - \varphi_{j^\prime}|} \le \frac{1}{4k} \, .
    \]
    For every $\eta \in \R^d$ such that $\|\eta\| = 1$ we then have $$1/2 - \langle \eta, \Sigma \eta \rangle = \sum_{j' \neq j} v_j \Sigma_{j,j'} v_{j'} \leq \frac{1}{4k}\sum_{j' \neq j} |v_j||v_{j'}| \leq \frac{1}{4}$$
    where the last bound is by Cauchy-Schwarz. By scaling, this shows $\langle \eta , \Sigma \eta \rangle \geq (1/4) \|\eta\|^2$ for all $\eta \in \R^d$, which gives the desired non-singularity.  If we set $\mathcal{B} = \{|F_n(i \varphi_j)| \leq  n^{-1/2} \, , \,  \text{ for all } j = 1,\ldots,4k\}$, then Claim \ref{claim:esseen_small_ball_for_vectors} shows $\P(\mathcal{B}) \leq C_k n^{-2k}$. Furthermore, the classical Littlewood-Offord lemma  (which follows from the case $d=1$ in Claim~\ref{claim:esseen_small_ball_for_vectors} via a simple covering argument, see~\cite[Corollary 1]{Esseen1968}) combined with a union bound gives
    \[
    \P \Big( |F_n(i\varphi_j)| \le \eps \ \text{ for some } j =1,\ldots,k\Big)\le C_k \, \eps
    \]
    for all $\eps\ge n^{-1/2}$. On the other hand, the classical Jensen bound~\cite[Chap.~5]{Ahlfors} shows that
    \begin{equation*}
    \mathcal{Z}_{F_n}(\D_R) \le C \log \frac{\max_{|z|\le 2R }|F_n(z)|}{\max_{|z|\le R} |F_n(z)|} \, .
    \end{equation*}
    Claim~\ref{claim:F_n_and_derivatives_are_controlled} implies that $\E\big[\displaystyle\max_{|z|\le 2R}\log^k|F_n(z)|)\big] \le C(k,R)$, and with the above we get that
    \begin{align*}
        &\E\Big[ \mathcal{Z}_{F_n}\big( \D_R \big)^k \Big] \\ &\le \E\Big[ \mathcal{Z}_{F_n}\big( \D_R \big)^k \mathbf{1}_{\{\exists j\in[k] \, : \, |F_n(i\varphi_j)| \ge 1 \}} \Big]   + \sum_{\ell=1}^{\lfloor \tfrac{1}{2} \log_2(n) \rfloor } \E\Big[ \mathcal{Z}_{F_n}\big( \D_R \big)^k \mathbf{1}_{\{\exists j\in[k] \, : \, |F_n(i\varphi_j)| \in [2^{-\ell},2^{-\ell + 1}) \}} \Big] + n^k \, \P(\mathcal{B}) \\ & \lesssim_{k,R} \sum_{\ell = 1 }^{\lfloor \tfrac{1}{2} \log_2(n) \rfloor } \ell^{k} \,  \P\big(\exists j\in[k] \, : \, |F_n(i\varphi_j)| \le 2^{-\ell + 1} \big) \lesssim_{k,R} \sum_{\ell\ge 1} \ell^{k} 2^{-\ell} \le C(k,R) \, . \qedhere
    \end{align*}
    \end{proof} 
    \allowdisplaybreaks
    \begin{proof}[Proof of Theorem~\ref{thm:main_result}]
        Without loss of generality we may assume that $\Phi$ is non-negative and has $\Phi \le 1$. For $M\ge 1$ we set $g_M(x) = \min\{x,M\}$, and note that Claim~\ref{claim:convergence_in_law_of_T_phi} implies that
        \begin{equation} \label{eq:convergence_of_expectations_with_truncations}
            \lim_{n\to \infty} \E\big[g_M\big(T_{\Phi}(\mathcal{Z}_{F_n})\big)\big] = \E\big[g_M\big(T_{\Phi}(\mathcal{Z}_{G})\big)\big] \, .
        \end{equation}
        Note also that by the monotone convergence theorem we have $\E\big[g_M\big(T_{\Phi}(\mathcal{Z}_{G})\big)\big] \xrightarrow{M \to \infty} \E\big[T_{\Phi}(\mathcal{Z}_{G})\big]\,.$  Thus, to show $\E\big[T_{\Phi}(\mathcal{Z}_{F_n})\big] \xrightarrow{n \to \infty}  \E\big[T_{\Phi}(\mathcal{Z}_{G})\big]$, it is enough to prove \begin{equation}\label{eq:F_n-truncation-limit}
            \lim_{M\to \infty} \limsup_{n\to \infty} \Big|\E\big[T_{\Phi}(\mathcal{Z}_{F_n})\big] - \E\big[ g_M\big(T_{\Phi}(\mathcal{Z}_{F_n})\big) \big]\Big| = 0 \,.
        \end{equation}
        To prove \eqref{eq:F_n-truncation-limit}, take $R\ge 1$ large enough so that $\text{supp}(\Phi)\subset \D_R^k$, and note $|T_\Phi(\mathcal{Z}_n(\D_R))| \leq \mathcal{Z}_n(\D_R)^k$ since $\Phi \leq 1$.  By Cauchy-Schwarz we may bound \begin{align*}
            &\Big|\E\big[T_{\Phi}(\mathcal{Z}_{F_n}) - g_M\big(T_{\Phi}(\mathcal{Z}_{F_n})\big) \big]\Big| \\ &\leq \E\Big[\mathbf{1}_{\{\mathcal{Z}_{F_n}(\D_R)^k \ge M\}} \mathcal{Z}_{F_n}(\D_R)^k\Big] \leq \left(\P\big(\mathcal{Z}_{F_n}(\D_R)^k \geq M\big)\cdot \E[\mathcal{Z}_{F_n}(\D_R)^{2k}] \right)^{1/2} \leq \frac{\sup_{n} \E[\mathcal{Z}_{F_n}(\D_R)^{2k}]}{M} \, , 
        \end{align*}
        where in the last step we used Markov's inequality. In view of Lemma~\ref{lemma:uniform_bound_on_k_moments}, the above display confirms \eqref{eq:F_n-truncation-limit} and completes the proof.
    \end{proof}

        \subsection*{Acknowledgments} MM is supported in part by NSF grants DMS-2336788 and DMS-2246624. OY is supported in part by NSF postdoctoral fellowship DMS-2401136.

\bibliographystyle{abbrv}
\bibliography{random_polynomials}

\begin{thebibliography}{10}

\bibitem{Ahlfors}
L.~Ahlfors.
\newblock {\em Complex analysis---an introduction to the theory of analytic
  functions of one complex variable}.
\newblock AMS Chelsea Publishing, Providence, RI, 2021.

\bibitem{Billingsley}
P.~Billingsley.
\newblock {\em Convergence of probability measures}.
\newblock John Wiley \& Sons, Inc., New York, second edition, 1999.

\bibitem{Do-Nguyen-Vu}
Y.~Do, O.~Nguyen, and V.~Vu.
\newblock Roots of random polynomials with coefficients of polynomial growth.
\newblock {\em Ann. Probab.}, 46(5):2407--2494, 2018.

\bibitem{Durrett-book}
R.~Durrett.
\newblock {\em Probability---theory and examples}.
\newblock Cambridge University Press, fifth edition, 2019.

\bibitem{Erdos-Offord}
P.~Erd\H{o}s and A.~C. Offord.
\newblock On the number of real roots of a random algebraic equation.
\newblock {\em Proc. London Math. Soc. (3)}, 6:139--160, 1956.

\bibitem{Erdos-Turan-AOM}
P.~Erd\H{o}s and P.~Tur\'an.
\newblock On the distribution of roots of polynomials.
\newblock {\em Ann. of Math. (2)}, 51:105--119, 1950.

\bibitem{Esseen1968}
C.~G. Esseen.
\newblock On the concentration function of a sum of independent random
  variables.
\newblock {\em Z. Wahrscheinlichkeitstheorie und Verw. Gebiete}, 9:290--308,
  1968.

\bibitem{GAFbook}
J.~B. Hough, M.~Krishnapur, Y.~Peres, and B.~Vir\'{a}g.
\newblock {\em Zeros of {G}aussian analytic functions and determinantal point
  processes}.
\newblock American Mathematical Society, 2009.

\bibitem{Ibragimov-Zeitouni}
I.~Ibragimov and O.~Zeitouni.
\newblock On roots of random polynomials.
\newblock {\em Trans. Amer. Math. Soc.}, 349(6):2427--2441, 1997.

\bibitem{Ibragimov-Maslova}
I.~A. Ibragimov and N.~B. Maslova.
\newblock The average number of real roots of random polynomials.
\newblock {\em Dokl. Akad. Nauk SSSR}, 199:13--16, 1971.

\bibitem{Kabluchko-Khoruzhenko-Marynych}
Z.~Kabluchko, B.~Khoruzhenko, and A.~Marynych.
\newblock On the distribution patterns of zeros for random polynomials with
  regularly varying coefficients.
\newblock {\em arXiv preprint}, 2025.
\newblock \url{https://arxiv.org/abs/2511.12302}.

\bibitem{Kac}
M.~Kac.
\newblock On the average number of real roots of a random algebraic equation.
\newblock {\em Bull. Amer. Math. Soc.}, 49:314--320, 1943.

\bibitem{michelen2021real}
M.~Michelen.
\newblock Real roots near the unit circle of random polynomials.
\newblock {\em Transactions of the American Mathematical Society},
  374(6):4359--4374, 2021.

\bibitem{Nazarov-Nishry-Sodin}
F.~Nazarov, A.~Nishry, and M.~Sodin.
\newblock Log-integrability of {R}ademacher {F}ourier series, with applications
  to random analytic functions.
\newblock {\em Algebra i Analiz}, 25(3):147--184, 2013.

\bibitem{ONguyen-Vu}
O.~Nguyen and V.~Vu.
\newblock Roots of random functions: a framework for local universality.
\newblock {\em Amer. J. Math.}, 144(1):1--74, 2022.

\bibitem{Shalom-Tao}
Y.~Shalom and T.~Tao.
\newblock A finitary version of {G}romov's polynomial growth theorem.
\newblock {\em Geom. Funct. Anal.}, 20(6):1502--1547, 2010.

\bibitem{Sparo-Sur}
D.~I. Sparo and M.~G. Sur.
\newblock On the distribution of roots of random polynomials.
\newblock {\em Vestnik Moskov. Univ. Ser. I Mat. Meh.}, 1962(3):40--43, 1962.

\bibitem{Tao-Vu-Book}
T.~Tao and V.~Vu.
\newblock {\em Additive combinatorics}.
\newblock Cambridge University Press, 2006.

\bibitem{Tao-Vu-IMRN}
T.~Tao and V.~Vu.
\newblock Local universality of zeroes of random polynomials.
\newblock {\em Int. Math. Res. Not. IMRN}, (13):5053--5139, 2015.

\end{thebibliography}
	
\end{document}